\numberwithin{equation}{section}
\theoremstyle{plain}
\newtheorem{thm}{Theorem}[section]
\newtheorem{prop}[thm]{Proposition}
\newtheorem{lem}[thm]{Lemma}
\theoremstyle{definition}
\newtheorem{exa}[thm]{Example}
\newtheorem{prob}[thm]{Problem}
\newtheorem{rem}[thm]{Remark}
\newtheorem{defi}[thm]{Definition}
\title{Energy of a graph and Randi\'c index of subgraphs}
\author{Gerardo Arizmendi and   Diego Huerta}
\date{ \today}
\begin{document}

\maketitle


\begin{abstract}
We give a new inequality between the energy of a graph and a weighted sum over the edges of the graph.  Using this inequality we prove that  $\mathcal{E}(G)\geq 2R(H)$, where $ \mathcal{E}(G)$ is the energy of a graph $G$ and $R(H)$ is the Randi\'c index of any subgraph of $G$ (not necessarily induced). In particular, this generalizes well-known inequalities $\mathcal{E}(G)\geq 2R(G)$  and  $\mathcal{E}(G)\geq 2\mu(G)$ where $\mu(G)$ is the matching number. We give other inequalities as applications to this result.

\end{abstract}

\noindent \textbf{Key words}: \emph{Randi\'c index; graph energy;  vertex energy; matching number}

\noindent \textbf{MSC 2010}. 05C50, 05C09.

\section{Introduction}

In this paper we consider simple undirected graphs $G=(V,E)$. We denote the cardinality of $V$ and $E$ by $n$ and $m$ respectively. We also denote the  adjacency matrix of $G$ by  $A(G)$, and its eigenvalues by $\lambda_1\leq\dots\leq\lambda_n$. The 
energy of $G$ is defined as

$$\mathcal E(G)=\sum_{i=1}^n |\lambda_i|.$$

The energy of a graph $\mathcal E(G)$ has been widely studied, see for example  \cite{Gut, Gut2, GFPR,GR,LSG}.
A graph $G$ is said to be 
hypoenergetic if $\mathcal E(G) < n$,
if $\mathcal E(G) \geq n$
then $G$ is said to be non-hypoenergetic
(see \cite{hypo}).

Considering $|A(G)|$ as the only semipositive matrix such that $A(G)^2=|A(G)|^2$ then we have that

$$\mathcal E(G)=Tr(|A(G)|)=\sum_{i=1}^n |A(G)|_{ii}.$$

This observation leads to the following important definition of the energy of a vertex, introduced and developed in \cite{AFJ,AJ}.

\begin{defi}
    For a graph $G$ and a vertex $i\in G$, \emph{the energy of the vertex}, which we denote by $\mathcal{E}(i)$, is given by

\[  \mathcal{E}(i)=|A(G)|_{ii}, \quad\quad~~~\text{for } i=1,\dots,n,
\]where $A(G)$ is the adjacency matrix of $G$.
\end{defi}

The energy of a vertex $\mathcal E(i)$ has been introduced in \cite{AJ} as a refinement of the energy of a graph. This concept has led to new bounds on the energy of a graph, see for example \cite{A-A,A-A2,AFJ,Fili,YLPL}.

One of the most important topological graph index is the so called Randi\'{c} index \cite{Ran}.
The Randi\'{c} index of $G$ is defined as

$$
R(G)
=
\sum_{(i,j)\in E(G)} \frac{1}{\sqrt{{deg(i)deg(j)}}}
$$

where $deg(i)$ denotes the degree of vertex $i$. In \cite{A-A}, an inequality between the graph energy and the Randi\'{c} index is given, namely 

$$\mathcal E(G)\geq 2R(G)$$

In this paper, we generalize this inequality by considering a weighted sum over the edges. Our inequality depends on a general choice of weights and one can give  specific weights to obtain particular inequalities.  The above inequality is given when one considers the weights to be $1/deg(i)$. Using this technique we prove that $$\mathcal E(G)\geq 2R(H)$$ for any subgraph $H$ (not necessarily induced). Other interesting old and new  inequalities for the energy of a graph are derived. These inequalities relate the energy with  different combinatorial quantities, such as the matching number \cite{Wong}, 
Hamiltonian paths and cycles \cite{Fili},
and $\{1,2\}$-factors \cite{A}.


\section{Main results}\label{mainsec}

The following inequality, proved in \cite{A-A}, is fundamental for results in this article. Since we will use the proof of the theorem we rewrite it here.  

\begin{thm}\cite{A-A}\label{T1}
Let $(i, j)$ be an edge of a simple (undirected) graph $G$. Then ${\mathcal E}(i){\mathcal E}(j)\geq1$. 
\end{thm}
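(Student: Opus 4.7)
The plan is to exploit the spectral decomposition of the adjacency matrix. Since $A=A(G)$ is real symmetric, write $A=\sum_{k=1}^n \lambda_k v_k v_k^{T}$ with $\{v_k\}$ an orthonormal eigenbasis; then $|A|=\sum_{k=1}^n |\lambda_k|\,v_k v_k^{T}$ is the unique positive semidefinite square root of $A^{2}$. In particular,
$$\mathcal{E}(i)=|A|_{ii}=\sum_{k=1}^n |\lambda_k|\,(v_k)_i^{\,2}.$$

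Next I would apply the Cauchy-Schwarz inequality with the pairing $\sqrt{|\lambda_k|}\,|(v_k)_i|$ against $\sqrt{|\lambda_k|}\,|(v_k)_j|$, which gives
$$\mathcal{E}(i)\,\mathcal{E}(j)=\Bigl(\sum_k |\lambda_k|(v_k)_i^{\,2}\Bigr)\Bigl(\sum_k |\lambda_k|(v_k)_j^{\,2}\Bigr)\geq \Bigl(\sum_k |\lambda_k|\,|(v_k)_i (v_k)_j|\Bigr)^{\!2}.$$
The triangle inequality then bounds the inner sum from below by $\bigl|\sum_k \lambda_k (v_k)_i (v_k)_j\bigr|=|A_{ij}|$. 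Because $(i,j)$ is an edge, $A_{ij}=1$, so $\mathcal{E}(i)\mathcal{E}(j)\geq 1$.

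The only point demanding care is choosing the Cauchy-Schwarz grouping correctly so that, after dropping absolute values via the triangle inequality, the cross-sum reassembles into the matrix entry $A_{ij}$; with the pairing above both inequalities go in the right direction and nothing else is needed. A slightly slicker reorganization of the same idea observes that $|A|\pm A\succeq 0$ (immediate from the spectral form) and that $A_{ii}=0$, so the $2\times 2$ principal minor inequality for PSD matrices yields $\mathcal{E}(i)\mathcal{E}(j)\geq (|A|_{ij}\pm A_{ij})^{2}$; averaging gives $\mathcal{E}(i)\mathcal{E}(j)\geq |A|_{ij}^{\,2}+A_{ij}^{\,2}\geq A_{ij}^{\,2}=1$. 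I would present whichever form reads more transparently with the paper's notation.
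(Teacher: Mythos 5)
Your main argument is correct and is essentially the paper's proof: both apply Cauchy--Schwarz to the vectors with coordinates $\sqrt{|\lambda_k|}\,(v_k)_i$ and $\sqrt{|\lambda_k|}\,(v_k)_j$, the only cosmetic difference being that the paper absorbs $\mathrm{sign}(\lambda_k)$ into one vector so the inner product equals $A_{ij}$ exactly, whereas you take absolute values and recover $|A_{ij}|$ via the triangle inequality. Your closing remark about the $2\times2$ principal minors of $|A|\pm A\succeq 0$ (using $A_{ii}=A_{jj}=0$) is a valid and slightly tidier repackaging of the same spectral idea, and even yields the marginally stronger bound $\mathcal{E}(i)\mathcal{E}(j)\geq |A|_{ij}^{2}+A_{ij}^{2}$.
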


\begin{proof}

 Let $A(G)$ be the adjacency matrix of $G$. Then, we can write $A(G)=UD U^t$, where $U=(u_{kl})$ is orthogonal and $D=(d_{kl})$ is a diagonal matrix such that $d_{kk}=\lambda_k$, with $\lambda_k$'s the eigenvalues of $A(G)$. A direct calculation shows that  ${\mathcal E}(i)=\sum_ku_{ik}^2|\lambda_k|$ and ${\mathcal E}(j)=\sum_ku_{jk}^2|\lambda_k|$. Moreover $A(G)_{ij}=\sum_ku_{ik}u_{jk}\lambda_k$. Since $i$ and $j$ are adjacent $A(G)_{ij}=1$.

Now consider $$v=(u_{i1}\sqrt{|\lambda_1|},\dots,u_{in}\sqrt{|\lambda_n|})$$ 
and
$$w=(u_{j1}sign(\lambda_1)\sqrt{|\lambda_1|},\dots,u_{jn}sign(\lambda_n)\sqrt{|\lambda_n|}),$$ 
then 
$$\left<v,w\right>^2=(\sum_ku_{ik}u_{jk}\lambda_k)^2=1,$$
$$||v||^2=\sum_ku_{ik}^2|\lambda_k|={\mathcal E}(i),$$
$$||w||^2=\sum_kv_{jk}^2|\lambda_k|={\mathcal E}(j).$$
which proves the assertion by the Cauchy-Schwarz inequality.
\end{proof}

\begin{lem}\label{lema}
Let $G$ be a simple and connected (undirected) graph. Then,  ${\mathcal E}(i){\mathcal E}(j)=1$ for all $(i,j)\in E(G)$  if and only if $G$ is a complete bipartite graph.   
\end{lem}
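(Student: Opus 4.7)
The plan is to prove both directions by exploiting the equality case of the Cauchy--Schwarz step inside the proof of Theorem \ref{T1}. The "if" direction is a direct calculation: the spectrum of $K_{p,q}$ is $\{\pm\sqrt{pq},0,\ldots,0\}$, and from the explicit $\pm\sqrt{pq}$-eigenvectors (constant on each side up to sign) one reads off $\mathcal{E}(i)=\sqrt{q/p}$ on the part of size $p$ and $\mathcal{E}(j)=\sqrt{p/q}$ on the part of size $q$, so the product is $1$ on every edge.

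For the "only if" direction, the hypothesis says the Cauchy--Schwarz step in the proof of Theorem \ref{T1} is tight on every edge, hence the two vectors $v$ and $w$ there are parallel, giving for each edge $(i,j)$ a nonzero scalar $c_{ij}$ with
\[
u_{ik}=c_{ij}\,\mathrm{sign}(\lambda_k)\,u_{jk}\qquad\text{whenever }\lambda_k\neq 0.
\]
First I would show that $G$ is bipartite. Iterating this relation along a closed walk $i_0,i_1,\dots,i_m=i_0$ yields, for each $k$ with $\lambda_k\neq 0$ and $u_{i_0k}\neq 0$, an identity $C\cdot\mathrm{sign}(\lambda_k)^m=1$ with $C$ the product of the $c_{i_si_{s+1}}$ (independent of $k$). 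Since $A_{i_0i_0}=\sum_k u_{i_0k}^2\lambda_k=0$ while $\mathcal{E}(i_0)>0$, the vertex $i_0$ must have nonzero projections on both positive and negative spectral parts, so if $m$ were odd one would force simultaneously $C=1$ and $C=-1$. Hence $G$ has no odd closed walk and is bipartite.

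With bipartition $V=V_1\cup V_2$, I would pair every positive eigenvalue $\mu_k$ with its negative partner: if $(x_k,y_k)$ is a unit $+\mu_k$-eigenvector then $(x_k,-y_k)$ is a unit $-\mu_k$-eigenvector, which gives $\|x_k\|=\|y_k\|=1/\sqrt{2}$ and (from orthogonality of eigenvectors for distinct eigenvalues) that the $x_k$'s (and likewise the $y_k$'s) are pairwise orthogonal. Under this pairing, the relation at an edge $(i,j)$ with $i\in V_1,j\in V_2$ collapses to the single equation $X_{i\bullet}=c_{ij}Y_{j\bullet}$, where $X,Y$ are the matrices whose columns are the $x_k$'s and $y_k$'s. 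Propagating this equation along an alternating path between any two vertices of $V_1$, using connectedness, forces all rows of $X$ to be pairwise proportional, so $\mathrm{rank}(X)\leq 1$; combined with the column orthogonality, $X$ can have at most one column. Thus $G$ has exactly one positive (and one negative) eigenvalue, so the biadjacency matrix has rank one; being $0/1$ with no zero row or column (by connectedness), it must be the all-ones matrix, and $G=K_{|V_1|,|V_2|}$.

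The main obstacle is this last paragraph, where one has to combine connectedness (to force $\mathrm{rank}(X)\leq 1$) with the spectral pairing (to force the columns of $X$ to be orthogonal) in order to conclude that $G$ has only one positive eigenvalue; the bipartiteness step is a clean parity argument and the final reduction from a rank-one $0/1$ biadjacency matrix to $K_{p,q}$ is routine.
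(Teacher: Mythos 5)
Your proof is correct, and it shares the paper's key mechanism --- the equality case of Cauchy--Schwarz in Theorem \ref{T1} yields, for each edge $(i,j)$, a nonzero $c_{ij}$ with $u_{ik}=c_{ij}\,\mathrm{sign}(\lambda_k)u_{jk}$ for all $k$ with $\lambda_k\neq 0$, which is then propagated along paths using connectedness --- but the endgame is genuinely different. The paper assumes $\mathrm{rank}(A(G))\geq 3$, picks two nonzero eigenvalues $\lambda_r,\lambda_s$ of the same sign (so the sign factors cancel), propagates to get $u_{ir}=\kappa_i u_{1r}$, $u_{is}=\kappa_i u_{1s}$ for all $i$, and derives a contradiction with the orthogonality of $U$ (the rescaled rows would all agree in two coordinates); it then invokes Proposition 1 of \cite{cons} to pass from ``connected with adjacency rank $2$'' to ``complete bipartite.'' You instead first prove bipartiteness by a parity argument on closed walks (using that $A_{i_0i_0}=0$ and $\mathcal{E}(i_0)>0$ force nonzero spectral weight on both signs --- a nice touch), then exploit the $\mu\mapsto-\mu$ eigenvector pairing of bipartite graphs to show the matrix $X$ of positive-eigenvalue half-eigenvectors has orthogonal columns yet rank at most one, hence exactly one positive eigenvalue, and finish with the elementary observation that a rank-one $0/1$ biadjacency matrix with no zero row or column is all-ones. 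Your route is longer but fully self-contained, replacing the external citation with explicit arguments; the paper's is shorter at the cost of leaning on Constantine's result. One small point worth making explicit in your write-up: the columns of $X$ are nonzero (norm $1/\sqrt{2}$), so column-orthogonality gives $\mathrm{rank}(X)=t$ exactly, which is what turns $\mathrm{rank}(X)\leq 1$ into $t\leq 1$.
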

\begin{proof}

A direct calculation proves that if $G$ is a complete bipartite graph then  ${\mathcal E}(i){\mathcal E}(j)=1$ for all $(i,j)\in E(G)$, see Section 5.2 in \cite{AFJ}.

Conversely, suppose that ${\mathcal E}(i){\mathcal E}(j)=1$ for all edges.  We will prove that $A(G)$ has 2 non zero eigenvalues, for if $rank(A(G))=2$, according to Proposition 1 of \cite{cons}, $G$ is bipartite complete.

Suppose that $rank(A(G))\geq 3$ then $A(G)$ has at least 2 eigenvalues of the same sign. So suppose that $\lambda_r$ and $\lambda_s$ have the same sign, say $\sigma$. By the proof of Theorem \ref{T1}, if $(i, j)$  is an edge, and we let $v=(u_{i1}\sqrt{|\lambda_1|},\dots,u_{in}\sqrt{|\lambda_n|})$ 
and
$w=(u_{j1}sign(\lambda_1)\sqrt{|\lambda_1|},\dots,u_{jn}sign(\lambda_n)\sqrt{|\lambda_n|})$, then  $v=\kappa w$, since this is the condition for equality in Cauchy-Schwarz. 

Moreover, since $$||v||^2={\mathcal E}(i)$$
$$||w||^2={\mathcal E}(j),$$
then  $\kappa \neq 0$. In particular, this means that $$u_{ir}\sqrt{|\lambda_r|}=\kappa u_{jr}\sigma\sqrt{|\lambda_r|}$$
$$u_{is}\sqrt{|\lambda_s|}=\kappa u_{js}\sigma\sqrt{|\lambda_s|}$$

i.e.

$$u_{ir}=\sigma\kappa u_{jr},$$
$$u_{is}=\sigma\kappa u_{js}.$$

So for every $i$ in $V(G)$ consider a path  between $i$ and $1$, then by transitivity in the path we can write

$$u_{ir}=\kappa_{i} u_{1r},$$
$$u_{is}=\kappa_{i} u_{1s},$$
for some $\kappa_i\neq 0$. Now let $u_i=(u_{i1},\dots,u_{in})$, since $U$ is orthogonal then
$\{u_1,\dots, u_n\}$ is a basis for $\mathbb{R}^n$. This implies
$\{u_1, u_2/\kappa_2,\dots,u_n/\kappa_n\}$ is a basis. Now observe that  the $r$ and $s$ coordinates of $u_i/\kappa_i$  are exactly $u_{1r}$ and $u_{1s}$, which is contradiction, since  the elements of the basis of $\mathbb R^n$ cannot all have the same values in two of its coordinates.

\end{proof}

Now let $i$ be any vertex of $G$ and for any neighbor $j$ of $i$ define a weight $p_i^j$ such that   $0\leq p_i^j\leq1$ and \[\sum_{j\in N(i)}p_i^j=1\]
where $N(i)$ denote the set of neighbors of $i$, then we have the next important theorem:

\begin{thm}\label{Main}
Let $G$ be a graph with energy $\mathcal{E}(G)$ then 
\[\mathcal{E}(G)\geq 2 \sum_{(i,j)\in E(G)}\sqrt{p_i^jp_j^i}.\]
\end{thm}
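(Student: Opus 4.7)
The plan is to express $\mathcal{E}(G)$ as a sum over edges of a natural weighted expression, and then apply AM-GM together with Theorem \ref{T1} termwise. The key identity is that $\mathcal{E}(G)=\sum_{i} \mathcal{E}(i)$, combined with the fact that the weights $p_i^j$ form a probability distribution on $N(i)$.

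Concretely, I would first write
\[
\mathcal{E}(G)=\sum_{i\in V(G)} \mathcal{E}(i)=\sum_{i\in V(G)}\sum_{j\in N(i)} p_i^j\,\mathcal{E}(i),
\]
using $\sum_{j\in N(i)} p_i^j=1$. Then I would reindex the double sum by edges: each edge $(i,j)\in E(G)$ appears once with weight $p_i^j\,\mathcal{E}(i)$ (coming from vertex $i$) and once with weight $p_j^i\,\mathcal{E}(j)$ (coming from vertex $j$), giving
\[
\mathcal{E}(G)=\sum_{(i,j)\in E(G)}\bigl(p_i^j\,\mathcal{E}(i)+p_j^i\,\mathcal{E}(j)\bigr).
\]

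Next, I would apply the AM-GM inequality to each edge summand to get
\[
p_i^j\,\mathcal{E}(i)+p_j^i\,\mathcal{E}(j)\geq 2\sqrt{p_i^j p_j^i\,\mathcal{E}(i)\mathcal{E}(j)},
\]
and then invoke Theorem \ref{T1}, which guarantees $\mathcal{E}(i)\mathcal{E}(j)\geq 1$ for every edge. This reduces the right-hand side to $2\sqrt{p_i^j p_j^i}$, and summing over $E(G)$ yields the claimed bound.

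There is no serious obstacle: the argument is essentially a bookkeeping trick (using the stochastic weights to decompose $\mathcal{E}(G)$ along edges) followed by two one-line estimates. The only mildly subtle step is the edge reindexing, where one must be careful that the factor of $2$ in the conclusion reflects the two contributions $p_i^j\mathcal{E}(i)$ and $p_j^i\mathcal{E}(j)$ arising at the two endpoints of each edge, rather than any double-counting.
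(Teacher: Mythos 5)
Your proposal is correct and follows essentially the same route as the paper's proof: decompose $\mathcal{E}(G)=\sum_i\mathcal{E}(i)$ over edges via the stochastic weights, then apply AM--GM and Theorem \ref{T1} edgewise. The only cosmetic difference is that you reindex from vertices to edges while the paper verifies the equivalent identity in the opposite direction.
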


\begin{proof}
Let $G=(V,E)$. 
Then, on one hand, \begin{eqnarray*}
\sum_{(i,j)\in E} p_i^j\mathcal{E}(i)+p_j^i\mathcal{E}(j)&=&\sum_{(i,j)\in E} p_i^j\mathcal{E}(i)+\sum_{(i,j)\in E}p_j^i\mathcal{E}(j)
\\&=&\frac{1}{2}\sum_{i\in V}\sum_{j\in N(i)} p_i^j\mathcal{E}(i)+ \frac{1}{2}\sum_{j\in V(G)}\sum_{i\in N(j)} p_j^i\mathcal{E}(j)\\
&=&\frac{1}{2}\sum_{i\in V} \mathcal{E}(i)+\frac{1}{2}\sum_{j\in V} \mathcal{E}(j)
=\mathcal{E}(G).
\end{eqnarray*}
On the other hand, by the classical AM-GM inequality, if $e=(i,j)$,

\begin{equation}\label{eql}
    p_i^j\mathcal{E}(i)+p_j^i\mathcal{E}(j)\geq2 \sqrt{p_i^j\mathcal{E}(i)p_j^i\mathcal{E}(j)}=2 \sqrt{p_i^jp_j^i}\sqrt{\mathcal{E}(i)\mathcal{E}(j)}\geq2 \sqrt{p_i^jp_j^i},
\end{equation}
where we used Theorem \ref{T1} in the last inequality. Finally, summing over all the edges of $G$ we obtain 

$$\mathcal{E}(G)=\sum_{(i,j)\in E} p_i^j\mathcal{E}(i)+p_j^i\mathcal{E}(j)\geq  \sum_{(i,j)\in E} 2 \sqrt{p_i^jp_j^i}=2  \sum_{(i,j)\in E}  \sqrt{p_i^jp_j^i}$$
as desired.

\end{proof}

\begin{prop}\label{prop:eq}
    Let $G$ be a graph with energy $\mathcal{E}(G)$, then there exist weights $\{p_i^j\}$ such that  
\[\mathcal{E}(G)= 2 \sum_{(i,j)\in E(G)}\sqrt{p_i^jp_j^i}.
\]    
    if and only if $G$ is the union of  complete bipartite graphs.
\end{prop}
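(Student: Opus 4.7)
The strategy is to track the equality conditions in the proof of Theorem \ref{Main}: the bound $\mathcal{E}(G) \geq 2\sum_{(i,j)\in E}\sqrt{p_i^j p_j^i}$ is attained with equality if and only if, for every edge $(i,j)$, equality holds in both steps of (\ref{eql}). The AM-GM step forces $p_i^j\mathcal{E}(i) = p_j^i\mathcal{E}(j)$, and the appeal to Theorem \ref{T1} forces $\mathcal{E}(i)\mathcal{E}(j) = 1$.

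For the forward direction, equality on every edge gives $\mathcal{E}(i)\mathcal{E}(j) = 1$ throughout $E(G)$. Because $A(G)$ is block diagonal along the connected components, the vertex energies computed in $G$ agree with those computed inside the component containing the vertex. Applying Lemma \ref{lema} to each component then shows every component is complete bipartite, so $G$ is a disjoint union of complete bipartite graphs.

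For the converse, suppose $G$ is such a disjoint union and work component by component on a $K_{a,b}$ with parts $A$ and $B$. Using the spectrum $\{\pm\sqrt{ab},0,\dots,0\}$, or combining $\mathcal{E}(i)\mathcal{E}(j) = 1$ from Lemma \ref{lema} with $\sum_v \mathcal{E}(v) = 2\sqrt{ab}$ and vertex-transitivity inside each part, one obtains $\mathcal{E}(v) = \sqrt{b/a}$ for $v \in A$ and $\mathcal{E}(v) = \sqrt{a/b}$ for $v \in B$. I would then propose the uniform weights $p_i^j = 1/\deg(i)$; clearly $\sum_{j\in N(i)} p_i^j = 1$, and for an edge $(i,j)$ with $i\in A$, $j\in B$ one checks $p_i^j\mathcal{E}(i) = (1/b)\sqrt{b/a} = 1/\sqrt{ab} = (1/a)\sqrt{a/b} = p_j^i\mathcal{E}(j)$, making AM-GM tight, while $\mathcal{E}(i)\mathcal{E}(j) = 1$ makes the Theorem \ref{T1} step tight as well. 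The only mildly delicate point is the localization to components in the forward direction, which follows immediately from the block decomposition of $A(G)$; once this is in place, exhibiting valid weights in the converse is a short explicit calculation.
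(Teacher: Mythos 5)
Your proof is correct and follows essentially the same route as the paper: equality forces tightness in the AM--GM step and in Theorem \ref{T1} on each edge, Lemma \ref{lema} is then applied componentwise (the paper leaves the localization to components implicit), and the converse uses the weights $p_i^j=1/deg(i)$ on each complete bipartite component. The only differences are cosmetic --- the paper verifies the converse by computing both sides directly ($2R(K_{n,m})=2\sqrt{nm}=\mathcal{E}(K_{n,m})$) rather than checking the two equality conditions via the vertex energies $\sqrt{b/a}$ and $\sqrt{a/b}$ --- and you share with the paper the small unaddressed subtlety that an edge carrying zero weight at both endpoints contributes $0=0$ to both sides, so the conclusion $\mathcal{E}(i)\mathcal{E}(j)=1$ is only forced on edges of positive weight.
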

\begin{proof}

For the equality to hold, Equation (\ref{eql}) implies that Lemma \ref{lema} should be satisfied, i.e. $G$ must be the union of complete bipartite graphs. On the other side if $G$ is the union of complete bipartite graphs then by taking
$p_i^j=1/deg(i)$ one gets $2R(G)$ on the right side. On the other hand, for the complete bipartite graph $K_{n,m}$, the spectrum is given by $\{\sqrt{nm},0 ^{(n+m-2)},-\sqrt{nm}\}$, see \cite{LSG}. Hence, in this case  $\mathcal E(G)=2\sqrt{nm}=2R(G)$.

\end{proof}

The following theorem, proved in \cite{A-A}, follows directly from Proposition \ref{prop:eq}. 

\begin{thm}
    \cite{A-A}\label{cota_randic}
   Let $G$ be a graph with  energy $\mathcal{E}(G)$ and Randi\'{c} index $R(G)$ then $\mathcal{E}(G)\geq 2R(G)$.  
\end{thm}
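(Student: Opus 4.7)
The plan is to derive this theorem as an immediate specialization of Theorem \ref{Main}, since the Randi\'{c} index $R(G) = \sum_{(i,j) \in E(G)} 1/\sqrt{\deg(i)\deg(j)}$ has a structure that matches the right-hand side of Theorem \ref{Main} once the weights $p_i^j$ are chosen correctly. The natural candidate is $p_i^j = 1/\deg(i)$ for each $j \in N(i)$.

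First I would verify that this choice defines an admissible weight system in the sense of Theorem \ref{Main}: plainly $0 \leq 1/\deg(i) \leq 1$ for each neighbor $j$ of $i$ (vertices of degree $0$ contribute no edges and can be ignored), and
\[
\sum_{j \in N(i)} p_i^j = \sum_{j \in N(i)} \frac{1}{\deg(i)} = \frac{\deg(i)}{\deg(i)} = 1.
\]
Next I would substitute into the inequality provided by Theorem \ref{Main}, obtaining
\[
\mathcal{E}(G) \geq 2 \sum_{(i,j) \in E(G)} \sqrt{p_i^j \, p_j^i} = 2 \sum_{(i,j) \in E(G)} \sqrt{\frac{1}{\deg(i)} \cdot \frac{1}{\deg(j)}} = 2 \sum_{(i,j) \in E(G)} \frac{1}{\sqrt{\deg(i)\deg(j)}} = 2 R(G),
\]
which is the claim.

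Since Theorem \ref{Main} has already been established, there is no genuine obstacle here; the entire content of the argument is identifying the right weights. The only small point worth noting (and which I would mention for completeness) is that isolated vertices of $G$ can be disregarded in the application because they produce no edge terms on either side; equivalently, $R(G)$ and $\mathcal{E}(G)$ are both additive under disjoint union, so one may restrict attention to the subgraph induced by vertices of positive degree where the weights $1/\deg(i)$ are well-defined.
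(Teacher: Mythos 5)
Your proposal is correct and follows exactly the paper's own argument: take $p_i^j = 1/\deg(i)$ and apply Theorem \ref{Main}. The additional remark about isolated vertices is a reasonable bit of housekeeping that the paper omits.
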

\begin{proof}
Taking $p_i^j=\frac{1}{deg(i)}$ we obtain in

$$\mathcal{E}(G)\geq  2 \sum_{(i,j)\in E(G)} \frac{1}{\sqrt{{deg(i)deg(j)}}}=2R(G)$$
\end{proof}

The next  generalization is one of the main results of the paper.
\begin{thm}\label{RH}
   Let $G$ be a graph with  energy $\mathcal{E}(G)$ and let $H$ be any subgraph of $G$  with Randi\'{c} index $R(H)$, then $\mathcal{E}(G)\geq 2R(H)$ with equality if and only if $G$ is the union of complete bipartite graphs and $R(H)=R(G)$.
\end{thm}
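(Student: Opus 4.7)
The plan is to apply Theorem \ref{Main} with weights $\{p_i^j\}$ engineered to concentrate the sum on $E(H)$. For each vertex $i \in V(G)$ with $deg_H(i) \geq 1$, I would set $p_i^j = 1/deg_H(i)$ for $j \in N_H(i)$ and $p_i^j = 0$ for $j \in N(i) \setminus N_H(i)$. For vertices isolated in $H$ but having a $G$-neighbor, I would assign any valid distribution (for definiteness, $p_i^j = 1/deg(i)$ uniformly on $N(i)$). In both cases $p_i^j \in [0,1]$ and $\sum_{j \in N(i)} p_i^j = 1$, so the hypotheses of Theorem \ref{Main} hold.

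With this choice, $\sqrt{p_i^j p_j^i} = 1/\sqrt{deg_H(i)\,deg_H(j)}$ for every edge $(i,j) \in E(H)$, while for $(i,j) \in E(G) \setminus E(H)$ the product vanishes whenever at least one endpoint has positive $H$-degree and is nonnegative otherwise. Theorem \ref{Main} then yields
\[
\mathcal{E}(G) \;\geq\; 2 \sum_{(i,j) \in E(G)} \sqrt{p_i^j p_j^i} \;\geq\; 2R(H),
\]
which is the desired inequality.

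For the equality case, $\mathcal{E}(G) = 2R(H)$ forces equality throughout the chain above, and in particular equality in \eqref{eql} on every edge of $G$. The Cauchy--Schwarz step inside \eqref{eql} then forces $\mathcal{E}(i)\mathcal{E}(j) = 1$ on every edge, so by Lemma \ref{lema} every connected component of $G$ is complete bipartite. As noted in the proof of Proposition \ref{prop:eq}, a union of complete bipartite graphs satisfies $\mathcal{E}(G) = 2R(G)$, and therefore $R(H) = R(G)$. The converse is immediate from the same identity.

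The main subtlety I foresee is that the weight constraint $\sum_{j \in N(i)} p_i^j = 1$ runs over the $G$-neighborhood rather than the $H$-neighborhood, so some weight must still be assigned at vertices isolated in $H$ but not in $G$. This produces a nonnegative residue in the displayed chain that is harmless for the inequality and that the equality analysis absorbs without further work.
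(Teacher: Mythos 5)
Your proof of the inequality $\mathcal{E}(G)\geq 2R(H)$ is correct and is essentially the paper's argument with one cosmetic difference: the paper relaxes the normalization to $\sum_{j\in N(i)}p_i^j\leq 1$ and sets every weight outside $E(H)$ to zero, whereas you keep the normalization $\sum_{j\in N(i)}p_i^j=1$ by padding the $H$-isolated vertices with an arbitrary distribution and then discarding the resulting nonnegative surplus terms. Both routes are valid and cost the same.

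The equality case, however, contains a genuine gap. You assert that equality forces equality in \eqref{eql} on every edge of $G$ and that ``the Cauchy--Schwarz step inside \eqref{eql} then forces $\mathcal{E}(i)\mathcal{E}(j)=1$ on every edge.'' That inference fails precisely on the edges where your weights vanish: if $p_i^j=p_j^i=0$, then \eqref{eql} reads $0\geq 0\geq 0$ and carries no information about $\mathcal{E}(i)\mathcal{E}(j)$. With your choice of weights, equality therefore only yields $\mathcal{E}(i)\mathcal{E}(j)=1$ on the edges of $H$ (plus the padded edges), not on all of $E(G)$. This matters because $H$ may be a proper, disconnected subgraph --- for instance a perfect matching --- in which case the hypothesis of Lemma \ref{lema} (the identity on \emph{all} edges of a \emph{connected} $G$) is not met, and its proof, which propagates the proportionality of eigenvector rows along paths of $G$, cannot be run along paths of $H$ alone. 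So the step ``every connected component of $G$ is complete bipartite'' is not justified as written; you would need an additional argument (e.g., extracting from the equality conditions that $\mathcal{E}(i)=c_K\,deg_H(i)$ on each component $K$ of $H$ and that $deg_H(i)deg_H(j)$ is constant on its edges, and then showing this propagates to all of $G$, or invoking the known equality characterization for $\mathcal{E}(G)=2\mu(G)$-type bounds). To be fair, the paper itself disposes of the equality case with a one-line appeal to Proposition \ref{prop:eq}, which suffers from the same zero-weight issue, so your write-up is no less rigorous than the source --- but the gap is real and should be flagged rather than papered over.
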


\begin{proof} It is easy to see that the proof of Theorem \ref{Main} follows exactly in the same way if one changes the second condition on the weights to \begin{equation}\label{weightin}
    \sum_{j\in N(i)}p_i^j\leq1.
\end{equation}
Now, for an arbitrary
subgraph $H$, and any edge $(i,j)\in E(H)$ let $p_i^j=\frac{1}{deg_H(i)}$ and for any edge $(i,j)\in E(G-H)$ let $p_i^j=0$, then (\ref{weightin}) is satisfied and we have
\begin{eqnarray*}
    \mathcal{E}(G)&\geq& 2  \sum_{(i,j)\in E(G)}  \sqrt{p_i^jp_j^i}\\&=&2\left( \sum_{(i,j)\in E(H)}  \sqrt{p_i^jp_j^i} +\sum_{(i,j)\in E(G-H)} \sqrt{p_i^jp_j^i}\right)\\&=&2\sum_{(i,j)\in E(H)}\frac{1}{\sqrt{{deg_H(i)deg_H(j)}}}\\&=&2R(H).
\end{eqnarray*}

One obtains the equality from  Proposition \ref{prop:eq} and its proof.

\end{proof}

The importance of the theorem above is that there may exist graphs $G$ and $H$ such that $H$ is subgraph of $G$ and $R(G)<R(H)$. In this way Theorem \ref{Main} improves Theorem \ref{cota_randic}. We now give an easy example where this happens.

\begin{exa} Let $G$ be the path of 4 vertices and $H$ two copies of $K_2$,  then
$H$ is a subgraph of  $G$ which is not induced. Observe that $2R(G)=1+2\sqrt{2}	\approx 3.82$,  $2R(H)=4$  and $\mathcal{E}(G)\approx 4.42$.

\end{exa}

In order to see the applications of Theorem \ref{RH} we prove some old and new results. We start with this classical inequality.

\begin{thm}
    \cite{CCGH} Let $G$ be a simple connected graph of size $n$. Then $\mathcal E(G)\geq 2\sqrt{n-1}$    
\end{thm}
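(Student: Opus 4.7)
The plan is to apply Theorem \ref{RH} with a cleverly chosen subgraph $H$ and then reduce the problem to a purely combinatorial bound on the Randi\'c index. Since $G$ is connected on $n$ vertices, it contains a spanning tree $T$, which is a subgraph of $G$ (in general, not induced) with exactly $n-1$ edges and no isolated vertices. Applying Theorem \ref{RH} with $H = T$ yields
\[
\mathcal{E}(G) \;\geq\; 2R(T),
\]
so it suffices to prove the combinatorial inequality $R(T) \geq \sqrt{n-1}$ for every tree $T$ on $n$ vertices.

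The motivation for this reduction is that the star $K_{1,n-1}$ is the unique tree on $n$ vertices minimizing the Randi\'c index, with $R(K_{1,n-1}) = (n-1)/\sqrt{n-1} = \sqrt{n-1}$. Observe that this also matches the extremal graph for the CCGH bound: $\mathcal{E}(K_{1,n-1}) = 2\sqrt{n-1}$, computed from its spectrum $\{\sqrt{n-1},0^{(n-2)},-\sqrt{n-1}\}$. Hence the two extremal statements line up, and the new machinery of Theorem \ref{RH} makes this alignment transparent.

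The main obstacle is the tree inequality $R(T) \geq \sqrt{n-1}$. One approach is to cite the classical result of Bollob\'as--Erd\H{o}s, which asserts the stronger statement $R(H)\geq \sqrt{n-1}$ for any graph $H$ on $n$ vertices with no isolated vertices, with equality iff $H=K_{1,n-1}$; applied to $T$ this is immediate. A more self-contained route is induction on $n$: the base case $n=2$ is $R(K_2)=1$. For the inductive step, select a leaf $v$ with neighbor $u$ of degree $d_u$, and compare $R(T)$ with $R(T-v)$; the contribution $1/\sqrt{d_u}$ from the deleted edge must be shown to dominate the discrepancy caused by the degree drop at $u$ on the remaining edges incident to $u$. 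A short convexity argument on $x\mapsto 1/\sqrt{x}$ handles this, with a small case split according to whether $u$ is a leaf in $T - v$.

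Chaining the two inequalities then gives $\mathcal{E}(G) \geq 2R(T) \geq 2\sqrt{n-1}$, as required. The equality discussion from Theorem \ref{RH} (equality forces $G$ to be a union of complete bipartite graphs with $R(T)=R(G)$) together with the equality case of the Randi\'c bound (forcing $T=K_{1,n-1}$) pinpoints $G = K_{1,n-1}$ as the extremal graph.
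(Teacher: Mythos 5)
Your proposal is correct and follows essentially the same route as the paper: pass to a spanning tree, apply Theorem \ref{RH}, and invoke the known fact that the star minimizes the Randi\'c index among trees on $n$ vertices (the paper cites the survey of Li and Shi for this, where you cite Bollob\'as--Erd\H{o}s or sketch an induction). The additional equality discussion and the inductive sketch are fine but not needed beyond what the paper does.
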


 \begin{proof}
Let $H$ be any spanning tree of $G$. Then by Theorem \ref{RH} 

$$\mathcal E(G)\geq 2R(H).$$

On the other hand, among trees with $n$ vertices, the star $S_n$ has the minimum Randi\'c index, see \cite{LS}. Hence, since $$2R(H)\geq 2R(S_n)=2\sqrt{n-1}$$

the inequality holds by transitivity.

 \end{proof}

\begin{prop}
    
\label{correg}
    Let $G$ be a graph with a subgraph $H$ of size $k$ which is the disjoint union of regular graphs. Then  $\mathcal E(G)\geq k$, in particular if $k=n$ then G is non-hypoenergetic.   
\end{prop}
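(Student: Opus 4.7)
The plan is to apply Theorem \ref{RH} directly, once we compute the Randi\'c index of $H$ in closed form. Decompose $H$ into its regular components $H_1,\dots,H_s$, where $H_t$ is $r_t$-regular on $n_t$ vertices, so that $\sum_{t=1}^s n_t = k$. Because the Randi\'c index is additive over connected components (equivalently, over a disjoint union), it suffices to compute $R(H_t)$ for each $t$ separately.

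For a single $r_t$-regular component with $r_t \geq 1$, every edge contributes $1/\sqrt{r_t \cdot r_t} = 1/r_t$, and the number of edges is $n_t r_t/2$ by the handshake lemma. Hence
\[
R(H_t) \;=\; \frac{n_t r_t / 2}{r_t} \;=\; \frac{n_t}{2},
\]
and summing yields $R(H) = \sum_{t=1}^s n_t / 2 = k/2$ (here we tacitly assume each regular component has positive degree, i.e.\ $H$ has no isolated vertices; otherwise one simply replaces $H$ by the subgraph obtained by deleting the isolated vertices without affecting the argument).

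Applying Theorem \ref{RH} with this subgraph $H$ gives
\[
\mathcal{E}(G) \;\geq\; 2 R(H) \;=\; 2 \cdot \frac{k}{2} \;=\; k,
\]
which is the desired inequality. The particular case $k=n$ then reads $\mathcal{E}(G) \geq n$, which is precisely the definition of $G$ being non-hypoenergetic.

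There is essentially no technical obstacle: the only thing to verify is the elementary Randi\'c computation for a regular graph, and the rest is a direct invocation of Theorem \ref{RH}. The only subtlety worth flagging is the handling of isolated vertices in $H$, which do not contribute to $R(H)$ and so must be either excluded from the hypothesis or absorbed into a reduction step.
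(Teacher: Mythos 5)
Your proof is correct and follows exactly the paper's argument: compute $R(H)=k/2$ for a disjoint union of regular graphs and invoke Theorem \ref{RH}. You supply more detail than the paper (the per-component edge count and the caveat about isolated, i.e.\ $0$-regular, components, which the paper leaves implicit), but the approach is identical.
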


\begin{proof}
 Since  $H$ is the disjoint union regular graphs,
 then $R(H)=k/2$. By Theorem  \ref{RH} we obtain that 

   \[\mathcal{E}(G)\geq 2R(H)=k,\]

\end{proof}

The next known results follow from Proposition \ref{correg}, by the fact that $\{1,2\}$-factors, Hamiltonian cycles and matchings are union of regular graph.

\begin{thm}\cite{A} Let $G$ be a graph of order $n$. If G has a 
\{1,2\}-factor, then $\mathcal E(G) \geq n$.

\end{thm}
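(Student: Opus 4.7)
The plan is to reduce this result immediately to Proposition \ref{correg} by recognizing that a $\{1,2\}$-factor is, structurally, a disjoint union of regular graphs spanning $G$.

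First I would recall the standard definition: a $\{1,2\}$-factor of $G$ is a spanning subgraph $H$ in which every vertex has degree $1$ or $2$, and where each connected component of $H$ is either a copy of $K_2$ or a cycle $C_k$ (any component with a degree-$1$ vertex must be $K_2$, since an endpoint of a path forces its neighbor to have only one other neighbor, and so on). Both $K_2$ and $C_k$ are regular graphs ($1$-regular and $2$-regular respectively), so $H$ is precisely a disjoint union of regular graphs.

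Next, since $H$ is spanning, its order equals $n$. Thus we may apply Proposition \ref{correg} with the subgraph $H$ and $k = n$, yielding $\mathcal{E}(G)\geq 2R(H) = n$, which is the required inequality. The equivalent route, if one prefers to invoke Theorem \ref{RH} directly, is to observe that for an $r$-regular component on $s$ vertices one has Randi\'c contribution $\tfrac{rs/2}{r} = s/2$, so summing over components gives $R(H) = n/2$, and Theorem \ref{RH} then gives $\mathcal{E}(G)\geq 2R(H) = n$.

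There is essentially no obstacle here: the only thing to verify is the structural description of a $\{1,2\}$-factor as a disjoint union of $K_2$'s and cycles, after which the result is a direct corollary of Proposition \ref{correg}. The work has already been absorbed into the weighted-sum framework of Theorem \ref{Main} and its application to subgraphs in Theorem \ref{RH}.
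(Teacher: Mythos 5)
Your reduction is exactly the paper's: the authors simply note that the result follows from Proposition \ref{correg} because a $\{1,2\}$-factor is a spanning disjoint union of regular graphs ($K_2$'s and cycles), each of which contributes half its vertex count to the Randi\'c index, giving $2R(H)=n$. One caveat worth fixing: your parenthetical argument that the degree condition alone forces every component to be $K_2$ or a cycle is false --- a path $P_3$ has all degrees in $\{1,2\}$ yet is neither, and indeed $\mathcal{E}(P_3)=2\sqrt{2}<3$, so the theorem itself would fail under the pure degree-set definition; one must use the definition from \cite{A}, in which each component of the factor is required to be $1$-regular or $2$-regular, after which the reduction is immediate.
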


\begin{thm}\cite{Wong} \label{matching}
   Let $G$ be a graph with  energy $\mathcal{E}(G)$ and matching number $\mu(G)$, then $\mathcal{E}(G)\geq 2\mu(G)$.  
\end{thm}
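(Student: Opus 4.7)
The plan is to obtain this as an immediate corollary of Proposition \ref{correg}, exactly in parallel with how the $\{1,2\}$-factor and Hamiltonian cycle cases were handled. A matching of $G$ of size $\mu(G)$ is, by definition, a set of $\mu(G)$ edges no two of which share a vertex; we take $H$ to be the subgraph of $G$ consisting of a maximum matching together with its $2\mu(G)$ endpoints.

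By construction, $H$ is a vertex-disjoint union of $\mu(G)$ copies of $K_2$. Each $K_2$ is $1$-regular, so $H$ is a disjoint union of regular graphs in the sense required by Proposition \ref{correg}. The number of vertices of $H$ (the ``size'' parameter $k$ in that proposition, as can be read off from its proof, where $R(H)=k/2$ comes from the $r$-regular count $kr/2$ edges each contributing $1/r$ to the Randi\'c index) is $k=2\mu(G)$.

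Applying Proposition \ref{correg} to this $H$ then yields
\[
\mathcal{E}(G)\geq k = 2\mu(G),
\]
which is the desired inequality.

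There is essentially no obstacle here: the only thing to be careful about is the convention for the parameter $k$ in Proposition \ref{correg} (number of vertices versus number of edges), but since a perfect matching of a graph on $2\mu(G)$ vertices is $1$-regular, both the Randi\'c-index computation $R(H)=\mu(G)$ and the vertex count $k=2\mu(G)$ agree and give the factor of $2$ needed in the bound.
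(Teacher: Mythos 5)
Your proof is correct and is exactly the paper's intended argument: the paper derives this theorem from Proposition \ref{correg} by observing that a maximum matching is a disjoint union of $1$-regular graphs ($K_2$'s) on $k=2\mu(G)$ vertices. Your careful note that the parameter $k$ in Proposition \ref{correg} must be read as the number of vertices (so that $R(H)=k/2$ holds for a union of regular graphs) resolves the only ambiguity in the same way the paper implicitly does.
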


\begin{thm}\cite{Fili}
    Let  $G$ be a graph of size n
    with a Hamiltonian cycle,
    then $\mathcal E(G) \geq n$. 
\end{thm}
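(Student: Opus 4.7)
The plan is to derive this theorem as an immediate corollary of Proposition \ref{correg}, which is the same route the authors use for the $\{1,2\}$-factor and matching theorems. The observation is that a Hamiltonian cycle of $G$ is, as a subgraph, just a spanning copy of the cycle $C_n$, which is $2$-regular.

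Concretely, I would first fix a Hamiltonian cycle $H \subseteq G$. Because $H$ visits every vertex of $G$ exactly once, $H$ has $k = n$ vertices, and as an abstract graph it is the cycle $C_n$, so in particular it is a disjoint union of regular graphs (a single $2$-regular component). Applying Proposition \ref{correg} with this choice of $H$ and $k = n$ gives $\mathcal{E}(G) \geq k = n$, which is exactly the claim.

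For completeness, it may be worth recording the one small computation that is implicit in Proposition \ref{correg} for this case: $C_n$ has $n$ edges, and every edge $(i,j)$ contributes $\frac{1}{\sqrt{\deg_H(i)\deg_H(j)}} = \frac{1}{2}$ to the Randi\'c index, so $R(H) = n/2$, and Theorem \ref{RH} then yields $\mathcal{E}(G) \geq 2 R(H) = n$.

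There is essentially no obstacle here; the only point that must not be overlooked is that the Hamiltonian cycle is used as a (non-induced) subgraph, so we rely on Theorem \ref{RH} rather than the weaker bound $\mathcal{E}(G) \geq 2R(G)$. Since Theorem \ref{RH} has already been established, the argument reduces to the one-line verification above.
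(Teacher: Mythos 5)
Your proof is correct and follows exactly the paper's route: the authors derive this theorem (together with the $\{1,2\}$-factor and matching results) from Proposition \ref{correg} by observing that a Hamiltonian cycle is a spanning $2$-regular subgraph, hence a union of regular graphs with $k=n$ vertices. Your explicit verification that $R(C_n)=n/2$ is just the computation already implicit in Proposition \ref{correg}, so nothing is missing.
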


Since the Randi\'c index of a path is given by $\frac{n-3+ 2 \sqrt{2}}{2}$ a similar result holds for graphs with Hamiltonian paths. Namely  if $G$ be a graph of size $n$
    with a Hamiltonian path,
    then $\mathcal E(G) \geq n-3 + 2 \sqrt{2}$. However, making use of Theorem \ref{Main} we can prove that  $\mathcal E(G)\geq n$, for $n$ even, and $\mathcal E(G)\geq \sqrt{n^2-1}$ for $n$ odd. This is stated in  Theorem \ref{HP} and proved in Section \ref{exsec}.

\begin{rem}
Observe that  AM-GM inequality implies that 

 \[2\sum_{(i,j)\in E}  \sqrt{p_i^jp_j^i}\leq  \sum_{(i,j)\in E}  (p_i^j+p_j^i)=\frac{1}{2}\sum_{i\in V}\sum_{j\in N(i)} p_i^j+ \frac{1}{2}\sum_{j\in V(G)}\sum_{i\in N(j)} p_j^i=n,\]
since $\sum_{j\in N(i)} p_i^j=1$. This is an small generalization of the fact that both the Randi\'c index and the matching number are less or equal than $n/2$. It also tell us that we can at most expect to bound by $n$. In this sense, for regular graphs our bound coincides with the Randi\'{c} index and for graphs with a perfect matching it also coincides with $n$.

\end{rem}

\section{Examples}\label{exsec}

    A natural question, in light of our results, is whether there  exist graphs such that  Theorem \ref{Main} improves Theorem \ref{RH}. The next two examples considered in \cite{AFJ} give a positive answer. Moreover, in the last example, asymptotically we have $\mathcal{E}(G)/2R(G)\approx 2/\sqrt{6}$, in contrast, for our new bound the quotient tends to 1. 

\subsection*{Path graph}

The \emph{path graph} $P_n$
is a graph with $n$ vertices
$\{v_1,...,v_{n}\}$ 
such that 
$v_{i}\sim v_{i+i}$
for $1\leq i \leq n-1$.

\begin{figure}[h!]
\centering

\begin{minipage}{0.45\textwidth}
\centering
\begin{tikzpicture}
\draw (0,0) -- (1,0);
\draw (1,0) -- (2,0);
\draw (2,0) -- (3,0);

\foreach \x/\label in {0/1,1/2,2/3,3/4}
    \filldraw (\x,0) circle (3pt) node[above] {};
\node [label=below: $P_4$] (*) at (1.5,-0.1) {};
\end{tikzpicture}
\end{minipage}
\begin{minipage}{0.45\textwidth}
\centering
\begin{tikzpicture}
\draw (0,0) -- (1,0);
\draw (1,0) -- (2,0);
\draw (2,0) -- (3,0);
\draw (3,0) -- (4,0);

\foreach \x/\label in {0/1,1/2,2/3,3/4,4/5}
    \filldraw (\x,0) circle (3pt) node[above]{};
\node [label=below: $P_5$] (*) at (2,-0.1) {};
\end{tikzpicture}
\end{minipage}
\caption{Path graphs $P_4$ and $P_5$}
\end{figure}
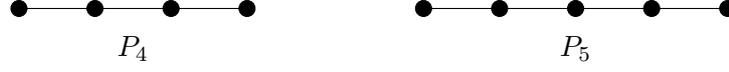

It is clear that if $n$ is even, 
then $P_n$ has a perfect matching
and consequently Collorary \ref{matching}
provides the bound
$\mathcal{E}(P_n) \geq n$,
which is the best possible
bound reached utilizing
Theorem \ref{Main}.

On the other hand, for odd $n$, consider the following weights

\[ p_{i}^{i+1} =  
\begin{cases}
\frac{n-i}{n-1}, & \text{if } i \text{ is odd} \\
\frac{i}{n+1}, & \text{if } i \text{ is even}
\end{cases}
\]
for $1\leq i \leq n-1$, and

\[ p_{i}^{i-1} =  
\begin{cases}
\frac{i-1}{n-1}, & \text{if } i \text{ is odd} \\
\frac{n+1-i}{n+1}, & \text{if } i \text{ is even}
\end{cases}
\]
for $2 \leq i \leq n$.
It is clear that
$ p_{i}^{i-1} +  p_{i}^{i+1} = 1$
for $2 \leq i \leq n-1$, and
$p_{1}^{2} = 1 = p_{n}^{n-1}$.
Moreover, note that
$p_{i}^{i+1} = p_{n+1-i}^{n-1}$
for $2 \leq i \leq n-1$
and then, 
the  weigths are symetrical
towards the center node $v_{\frac{n+1}{2}}$.

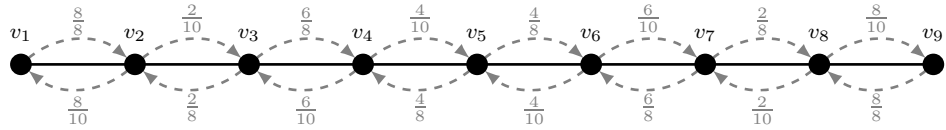
\begin{figure}[h!]
\centering
\begin{tikzpicture}
\pgfmathsetmacro{\nodeSize}{0.25}
\pgfmathsetmacro{\nodeDist}{1.5}
\pgfmathsetmacro{\nodeDistArriba}{0.5}
\pgfmathsetmacro{\grosorAristas}{1}
\pgfmathsetmacro{\grosorFlechas}{1}
\pgfmathsetmacro{\anguloFlechas}{40}
\pgfmathsetmacro{\colorFlechas}{"gray"}
\Vertex[size = \nodeSize, color=black,
x = \nodeDist*0, label=$v_1$, position=above, 
fontscale = 1, distance = \nodeDistArriba mm]{1}
\Vertex[size = \nodeSize, color=black,
x = \nodeDist*1, label=$v_2$, position=above, 
fontscale = 1, distance = \nodeDistArriba mm]{2}
\Vertex[size = \nodeSize, color=black,
x = \nodeDist*2, label=$v_3$, position=above,
fontscale = 1, distance = \nodeDistArriba mm]{3}
\Vertex[size = \nodeSize, color=black,
x = \nodeDist*3, label=$v_4$, position=above, 
fontscale = 1, distance = \nodeDistArriba mm]{4}
\Vertex[size = \nodeSize, color=black,
x = \nodeDist*4, label=$v_5$, position=above, 
fontscale = 1, distance = \nodeDistArriba mm]{5}
\Vertex[size = \nodeSize, color=black,
x = \nodeDist*5, label=$v_6$, position=above, 
fontscale = 1, distance = \nodeDistArriba mm]{6}
\Vertex[size = \nodeSize, color=black,
x = \nodeDist*6, label=$v_7$, position=above, 
fontscale = 1, distance = \nodeDistArriba mm]{7}
\Vertex[size = \nodeSize, color=black,
x = \nodeDist*7, label=$v_8$, position=above, 
fontscale = 1, distance = \nodeDistArriba mm]{8}
\Vertex[size = \nodeSize, color=black,
x = \nodeDist*8, label=$v_9$, position=above, 
fontscale = 1, distance = \nodeDistArriba mm]{9}
\Edge[lw= \grosorAristas pt, color=black](1)(2)
\Edge[lw= \grosorAristas pt, color=black](2)(3)
\Edge[lw= \grosorAristas pt, color=black](3)(4)
\Edge[lw= \grosorAristas pt, color=black](4)(5)
\Edge[lw= \grosorAristas pt, color=black](5)(6)
\Edge[lw= \grosorAristas pt, color=black](6)(7)
\Edge[lw= \grosorAristas pt, color=black](7)(8)
\Edge[lw= \grosorAristas pt, color=black](8)(9)
\Edge[lw= \grosorFlechas pt, color= \colorFlechas,
bend= \anguloFlechas, label= {$\frac{8}{8}$},
position=above, style={dashed}, Direct](1)(2)
\Edge[lw= \grosorFlechas pt, color= \colorFlechas,
bend= \anguloFlechas, label=$\frac{2}{10}$,
position=above, style={dashed}, Direct](2)(3)
\Edge[lw= \grosorFlechas pt, color=\colorFlechas,
bend= \anguloFlechas, label= $\frac{6}{8}$,
position=above, style={dashed}, Direct](3)(4)
\Edge[lw= \grosorFlechas pt, color=\colorFlechas,
bend= \anguloFlechas, label=$\frac{4}{10}$,
position=above, style={dashed}, Direct](4)(5)
\Edge[lw= \grosorFlechas pt, color=\colorFlechas,
bend= \anguloFlechas, label= $\frac{4}{8}$,
position=above, style={dashed}, Direct](5)(6)
\Edge[lw= \grosorFlechas pt, color=\colorFlechas,
bend= \anguloFlechas, label= $\frac{6}{10}$,
position=above, style={dashed}, Direct](6)(7)
\Edge[lw= \grosorFlechas pt, color=\colorFlechas,
bend= \anguloFlechas, label= $\frac{2}{8}$,
position=above, style={dashed}, Direct](7)(8)
\Edge[lw= \grosorFlechas pt, color=\colorFlechas,
bend= \anguloFlechas, label= $\frac{8}{10}$,
position=above, style={dashed}, Direct](8)(9)
\Edge[lw= \grosorFlechas pt, color=\colorFlechas,
bend=  \anguloFlechas, label=$\frac{8}{10}$,
position=below, style={dashed}, Direct](2)(1)
\Edge[lw= \grosorFlechas pt, color=\colorFlechas,
bend=  \anguloFlechas, label=$\frac{2}{8}$,
position=below, style={dashed}, Direct](3)(2)
\Edge[lw= \grosorFlechas pt, color=\colorFlechas,
bend=  \anguloFlechas, label=$\frac{6}{10}$,
position=below, style={dashed}, Direct](4)(3)
\Edge[lw= \grosorFlechas pt, color=\colorFlechas,
bend=  \anguloFlechas, label=$\frac{4}{8}$,
position=below, style={dashed}, Direct](5)(4)
\Edge[lw= \grosorFlechas pt, color=\colorFlechas,
bend= \anguloFlechas, label=$\frac{4}{10}$,
position=below, style={dashed}, Direct](6)(5)
\Edge[lw= \grosorFlechas pt, color=\colorFlechas,
bend= \anguloFlechas, label=$\frac{6}{8}$,
position=below, style={dashed}, Direct](7)(6)
\Edge[lw= \grosorFlechas pt, color=\colorFlechas,
bend= \anguloFlechas, label=$\frac{2}{10}$,
position=below, style={dashed}, Direct](8)(7)
\Edge[lw= \grosorFlechas pt, color=\colorFlechas,
bend= \anguloFlechas, label=$\frac{8}{8}$,
position=below, style={dashed}, Direct](9)(8)
\end{tikzpicture}
\caption{Weights example for $P_9$}
\end{figure}

Since $n$ is odd, it can
be written as $n = 2k + 1$, for
$k \in \mathbb{N}$, 
then $n-1 = 2k$
and we can compute.

\begin{eqnarray*}
2\sum_{(i,j)\in E(G)}\sqrt{p_i^jp_j^i} &=&
2  \sum_{i = 1}^{2k}
\sqrt{p_{i}^{i+1} p_{i+1}^{i}}\\
&=& 2 \left(
\sum_{i = 1}^{k}
\sqrt{p_{2i}^{2i+1} p_{2i+1}^{2i}} 
+ \sum_{i = 1}^{k}
\sqrt{p_{2i - 1}^{2i} p_{2i}^{2i - 1}} 
\right) \\
&=& 2 \left(
\sum_{i = 1}^{k}
\sqrt{\frac{2i}{n+1} \frac{2i}{n-1}} 
+ \sum_{i = 1}^{k}
\sqrt{\frac{n-2i+1}{n-1} \frac{n-2i+1}{n+1}} 
\right) \\
&=& \frac{2}{\sqrt{(n-1)(n+1)}} \left(
\sum_{i = 1}^{k} 2i
+
\sum_{i = 1}^{k} n - 2i + 1
\right) \\
&=&
\frac{(2k)(n+1)}{\sqrt{(n-1)(n+1)}} \\
&=&
\frac{(n-1)(n+1)}{\sqrt{(n-1)(n+1)}}\\
&=&
\sqrt{n^2 - 1}
\end{eqnarray*}

Therefore, using 
Theorem \ref{Main}
we get the bound
$\mathcal{E}(P_n) \geq \sqrt{n^2 - 1}$
for odd $n$.
Furthermore, this 
allows us to bound the energy of graphs
having Hamiltonian paths.
Namely, we get the following 
theorem.

\begin{thm}
\label{HP}
    Let  $G$ be a graph of size $n$
    with a Hamiltonian path.
    If $n$ is even then $\mathcal E(G) \geq n$. 
   If $n$ is odd then
    $\mathcal E(G) \geq \sqrt{n^2 - 1}$.
\end{thm}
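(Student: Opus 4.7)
The plan is to reduce the general statement to the path calculation already carried out in the excerpt, using the subgraph (inequality) version of the weight condition that was observed in the proof of Theorem \ref{RH}.

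For the even case, I would argue by a perfect matching. Write the Hamiltonian path as $v_1 v_2 \cdots v_n$; since $n$ is even, the edges $\{(v_1,v_2),(v_3,v_4),\dots,(v_{n-1},v_n)\}$ form a perfect matching $M$ of $G$. Then $M$ is a spanning subgraph of $G$ which is a disjoint union of $1$-regular graphs (copies of $K_2$) on $n$ vertices, so Proposition \ref{correg} gives $\mathcal{E}(G)\ge n$ immediately (one may also invoke Theorem \ref{matching} with $\mu(G)\ge n/2$).

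For the odd case, let $H$ be the Hamiltonian path in $G$, with vertices labelled $v_1,\dots,v_n$ so that $v_i\sim v_{i+1}$ in $H$. I would transplant the weights from the $P_n$ computation: for every edge $(v_i,v_{i+1})\in E(H)$ define $p_i^{i+1}$ and $p_{i+1}^{i}$ exactly by the case-split formulas given above for $P_n$ (depending on the parity of the endpoint indices), and set $p_i^j=0$ for every edge $(i,j)\in E(G)\setminus E(H)$. A one-line check shows $\sum_{j\in N(i)} p_i^j \le 1$ at every vertex $i$ of $G$: for interior path vertices the two nonzero weights already sum to $1$; for the path endpoints the single nonzero weight equals $1$; for any vertex of $G$ not on $H$ all weights are $0$. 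Thus the weights satisfy condition (\ref{weightin}), and the generalized version of Theorem \ref{Main} used inside the proof of Theorem \ref{RH} applies.

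Applying that inequality,
\[
\mathcal{E}(G)\;\ge\;2\sum_{(i,j)\in E(G)}\sqrt{p_i^j p_j^i}\;=\;2\sum_{(i,j)\in E(H)}\sqrt{p_i^j p_j^i},
\]
since every edge outside $E(H)$ contributes $0$. The sum on the right is exactly the one evaluated in the $P_n$ computation just performed, which collapses to $\sqrt{n^2-1}$. This yields $\mathcal{E}(G)\ge \sqrt{n^2-1}$, completing the odd case.

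No new obstacle really arises: the genuine content (constructing admissible weights that telescope into $\sqrt{n^2-1}$) has already been carried out for $P_n$. The only thing to be careful about is the bookkeeping at vertices of $G$ that are incident to edges outside the Hamiltonian path; padding those weights with zeros is precisely what forces us into the inequality form $\sum_j p_i^j\le 1$ rather than equality, which is why we need the subgraph-version argument from Theorem \ref{RH} rather than Theorem \ref{Main} as originally stated.
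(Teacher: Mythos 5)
Your proof is correct and follows essentially the same route as the paper: the even case via the perfect matching contained in the Hamiltonian path, and the odd case by transplanting the $P_n$ weights onto the Hamiltonian path and padding all other edges of $G$ with zero weights, which is exactly the inequality-form condition (\ref{weightin}) from the proof of Theorem \ref{RH}. In fact you make explicit the zero-padding step that the paper leaves implicit when it passes from the $P_n$ computation to an arbitrary graph with a Hamiltonian path.
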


\subsection*{Dandelion graphs}

 We call a \emph{dandelion}, $D_n$, 
a graph with $3n+1$ vertices
$\{v_1,...,v_{3n+1}\}$ in which
$v_1$ is connected to every vertex 
in the set $\{v_2,\dots,v_n+1\}$ and,
for $2\leq i \leq n+1$, 
$v_{i}\sim v_{n+i}$ and $v_{i}\sim v_{2n+i}$ .
This is 
not an edge transitive graph, 
but there are three types of vertices:
(1) $v_1$, (2) neighbours of $v_1$ 
and (3) leafs.

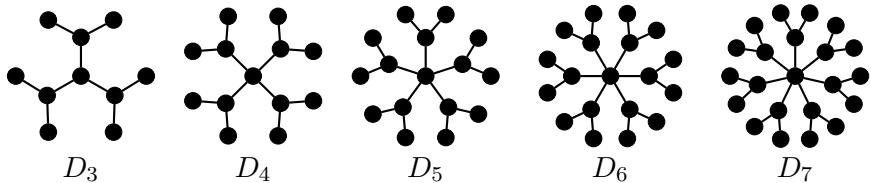
\begin{figure}[h]
\begin{tikzpicture}
[mystyle/.style={scale=0.6, draw,shape=circle,fill=black}]
\def\ngon{3}
\def\nngon{6}
\node[regular polygon,regular polygon sides=\ngon,minimum size=1cm] (p) {};
\foreach\x in {1,...,\ngon}{\node[mystyle] (p\x) at (p.corner \x){};}
\node[mystyle] (p0) at (0,0) {};
\foreach\x in {1,...,\ngon}
{
 \draw[thick] (p0) -- (p\x);
}

\node[regular polygon,regular polygon sides=2*\ngon,minimum size=1.7cm] (r) {};
\foreach\x in {1,...,\nngon}{\node[mystyle] (r\x) at (r.corner  \x ){};}
\foreach\x in {1,...,\ngon}{
\foreach\y in {\the\numexpr2*\x-1\relax,...,\the\numexpr2*\x\relax}
    \draw[thick] (p\x) -- (r\y);
  }

  \node [label=below:$D_{\ngon}$] (*) at (0,-0.8) {};
 \end{tikzpicture}\quad
\begin{tikzpicture}
[mystyle/.style={scale=0.6, draw,shape=circle,fill=black}]
\def\ngon{4}
\def\nngon{8}
\node[regular polygon,regular polygon sides=\ngon,minimum size=1cm] (p) {};
\foreach\x in {1,...,\ngon}{\node[mystyle] (p\x) at (p.corner \x){};}
\node[mystyle] (p0) at (0,0) {};
\foreach\x in {1,...,\ngon}
{
 \draw[thick] (p0) -- (p\x);
}

\node[regular polygon,regular polygon sides=2*\ngon,minimum size=1.7cm] (r) {};
\foreach\x in {1,...,\nngon}{\node[mystyle] (r\x) at (r.corner  \x ){};}
\foreach\x in {2,...,\ngon}{
\foreach\y in {\the\numexpr2*\x-2\relax,...,\the\numexpr2*\x-1\relax}
    \draw[thick] (p\x) -- (r\y);
  }

  \draw[thick] (p1) -- (r1);

 \draw[thick] (p1) -- (r\nngon);
  \node [label=below:$D_{\ngon}$] (*) at (0,-0.8) {};
 \end{tikzpicture}\quad
\begin{tikzpicture}
[mystyle/.style={scale=0.6, draw,shape=circle,fill=black}]
\def\ngon{5}
\def\nngon{10}
\node[regular polygon,regular polygon sides=\ngon,minimum size=1cm] (p) {};
\foreach\x in {1,...,\ngon}{\node[mystyle] (p\x) at (p.corner \x){};}
\node[mystyle] (p0) at (0,0) {};
\foreach\x in {1,...,\ngon}
{
 \draw[thick] (p0) -- (p\x);
}

\node[regular polygon,regular polygon sides=2*\ngon,minimum size=1.7cm] (r) {};
\foreach\x in {1,...,\nngon}{\node[mystyle] (r\x) at (r.corner  \x ){};}
\foreach\x in {1,...,\ngon}{
\foreach\y in {\the\numexpr2*\x-1\relax,...,\the\numexpr2*\x\relax}
    \draw[thick] (p\x) -- (r\y);
  }
  \node [label=below:$D_{\ngon}$] (*) at (0,-0.8) {};
 \end{tikzpicture}
  \quad
\begin{tikzpicture}
[mystyle/.style={scale=0.6, draw,shape=circle,fill=black}]
\def\ngon{6}
\def\nngon{12}\node[regular polygon,regular polygon sides=\ngon,minimum size=1cm] (p) {};
\foreach\x in {1,...,\ngon}{\node[mystyle] (p\x) at (p.corner \x){};}
\node[mystyle] (p0) at (0,0) {};
\foreach\x in {1,...,\ngon}
{
 \draw[thick] (p0) -- (p\x);
}

\node[regular polygon,regular polygon sides=2*\ngon,minimum size=1.7cm] (r) {};
\foreach\x in {1,...,\nngon}{\node[mystyle] (r\x) at (r.corner  \x ){};}
\foreach\x in {2,...,\ngon}{
\foreach\y in {\the\numexpr2*\x-2\relax,...,\the\numexpr2*\x-1\relax}
    \draw[thick] (p\x) -- (r\y);
  }

  \draw[thick] (p1) -- (r1);

 \draw[thick] (p1) -- (r\nngon);
  \node [label=below:$D_{\ngon}$] (*) at (0,-0.8) {};
 \end{tikzpicture}
  \quad\begin{tikzpicture}
[mystyle/.style={scale=0.6, draw,shape=circle,fill=black}]
\def\ngon{7}
\def\nngon{14}
\node[regular polygon,regular polygon sides=\ngon,minimum size=1cm] (p) {};
\foreach\x in {1,...,\ngon}{\node[mystyle] (p\x) at (p.corner \x){};}
\node[mystyle] (p0) at (0,0) {};
\foreach\x in {1,...,\ngon}
{
 \draw[thick] (p0) -- (p\x);
}

\node[regular polygon,regular polygon sides=2*\ngon,minimum size=1.7cm] (r) {};
\foreach\x in {1,...,\nngon}{\node[mystyle] (r\x) at (r.corner  \x ){};}
\foreach\x in {1,...,\ngon}{
\foreach\y in {\the\numexpr2*\x-1\relax,...,\the\numexpr2*\x\relax}
    \draw[thick] (p\x) -- (r\y);
  }
  \node [label=below:$D_{\ngon}$] (*) at (0,-0.8) {};
 \end{tikzpicture}

 \caption{Dandelion graphs $D_3$, $D_4$, $D_5$, $D_6$ and $D_7$. }
\end{figure}

The spectra of $D_n$ is
$ \{ 
[-\sqrt{n+2}] ^{1} ,
[-\sqrt{2}] ^{n-1} ,
[0] ^{n+1} ,
[\sqrt{2}] ^ {n-1} ,
[\sqrt{n+2}] ^{1} 
\}$, see \cite{AFJ}.
The total energy  is given by 
$\mathcal{E}(D_n)=2(n-1)\sqrt{2}+ 2\sqrt{n+2}.$


The Randi\'{c} index is easily computed,

\begin{eqnarray*}
    R(D_n) 
            &=& \sum_{i = 2}^{n+1}
                \frac{1}{\sqrt{{deg(1)deg(v_i)}}}
                +
                \frac{1}{\sqrt{{deg(v_i)deg(v_{n+i})}}}
                +
                \frac{1}{\sqrt{{deg(v_i)deg(v_{2n+i})}}} \\
            &=& n \frac{1}{\sqrt{3n}} + 
               2n \frac{v_1}{\sqrt{3}}\\
           &=& \frac{\sqrt{n} + 2n}{\sqrt{3}}.
\end{eqnarray*}

Therefore, Corollary \ref{cota_randic}
provides the bound
$\mathcal{E}(G)\geq \frac{4n + 2 \sqrt{n}}{\sqrt{3}}$. On the other hand, 
for $n \geq 2$,
consider the subgraph $H$
formed by all the nodes in $D_n$,
and the connections
$v_{i}\sim v_{n+i}$ and $v_{i}\sim v_{2n+i}$
for $2\leq i \leq n+1$,
and $v_{1}\sim v_{2}$.
Note that $H$ is the disjoint union
of $n-1$ copies of $P_3$ and one copy of $S_4$.
The Randi\'{c} index of $H$ is given by

\begin{eqnarray*}
    R(H) &=&  (n-1) R(P_3) + R(S_4) \\
         &=& (n-1) \sqrt{2} + \sqrt{3}.
\end{eqnarray*}

Therefore, by utilizing Theorem \ref{RH}
we get that
$\mathcal{E}(G)\geq 2\sqrt{2} (n-1)+ 2 \sqrt{3}$. It is not hard to prove that this subgraph is maximal with respect to the Randi\'{c} index. Furthermore,
consider the weights
$p_{i}^{1} = \frac{1}{2n+1}$,
$p_{1}^{i} = \frac{1}{n}$,
$p_{i}^{n+i} = p_{i}^{2n+i} = \frac{n}{2n+1}$
and
$p_{n+i}^{i} = p_{2n+i}^{i} = 1$
for $2\leq i \leq n+1$.
Then, by using Theorem
\ref{Main} we get that

\begin{eqnarray*}
    \mathcal{E}(G) &\geq& 
    2 \sum_{(i,j)\in E(G)} \sqrt{p_i^j p_j^i}\\
    &=& 2  \sum_{i = 2}^{n+1}
    \sqrt{p_{i}^{1} p_{1}^{i}} + 
    \sqrt{p_{i}^{n+i} p_{n+i}^{i}} + 
    \sqrt{p_{i}^{2n+i} p_{2n+i}^{i}} \\
    &=&
    2n \sqrt{\frac{1}{2n+1} \frac{1}{n}}
    + 4n \sqrt{\frac{n}{2n+1}} \\
    &=&
    \frac{2\sqrt{n}}{\sqrt{2n+1}} +
    \frac{4 n \sqrt{n}}{\sqrt{2n+1}} \\
    &=&
    \frac{2\sqrt{n}(1+2n)}{\sqrt{2n+1}}\\
    &=&
    2\sqrt{n}\sqrt{2n+1}\\
    &=&
    2 \sqrt{2n^2 + n}. 
\end{eqnarray*}

Theorem
\ref{Main} yields the bound
$\mathcal{E}(G) \geq 2 \sqrt{2n^2 + n}$,
which improves the bounds
provided by 
Corollary \ref{cota_randic}
and Theorem \ref{RH}.

\begin{prob}
For a given graph $G$, find the weights with maximize  \[\sum_{(i,j)\in E(G)}  \sqrt{p_i^jp_j^i}.\]
\end{prob}

\begin{prob}
For a given graph $G$, find a subgraph $H$ which is maximal with respect to the Randi\'c index.    
\end{prob}

Using a computer program, we have identified the types of maximal subgraphs in terms of the Randi\'c index for graphs with up to 7 vertices. In every case, these subgraphs turn out to be unions of regular and bipartite graphs.

Solving both problems
would lead to better bounds
by means of Theorems
\ref{Main} and \ref{RH}, respectively.

\noindent Department of Actuarial Sciences, Physics and Mathematics. Universidad de las Am\'{e}ricas Puebla. San Andr\'{e}s Cholula, Puebla. M\'{e}xico.\\\noindent Email: \emph{gerardo.arizmendi@udlap.mx, diego.huertaoa@udlap.mx}
	\\~
	


\begin{thebibliography}{100}


\bibitem{A} Aashtab A., Akbari S., Ghasemian E., Ghodrati A.H., Hosseinzadeh M.A., Koorepazan-Moftakhar F., (2019) On the minimum energy of regular graphs  Linear Algebra and Its Applications, 581 , pp. 51-71.
 
\bibitem{A-A} Arizmendi, G. \& Arizmendi, O., (2021)  Energy of a graph and Randi\'{c} Index. Linear Algebra and its Applications Volume 609 , 332--338. 

\bibitem{A-A2}  Arizmendi, G. \& Arizmendi, O. (2023), Energy and Randi\'{c} index of directed graphs
Linear and Multilinear Algebra, Volume 71 ,2696-2707.
 
\bibitem{AFJ} Arizmendi, O., Fernandez J., \& Juarez-Romero, O., (2018)  Energy of a vertex. Linear Algebra and its Applications Volume 557 , 464--495. 

\bibitem{AJ} Arizmendi, O., \& Juarez-Romero, O. (2018). On bounds for the energy of graphs and digraphs.  Contributions of Mexican mathematicians abroad in pure and applied mathematics, 1–19, Contemp. Math., 709, Aportaciones Mat., Amer. Math. Soc., Providence, RI.

\bibitem{CCGH} Caporossi, G., Cvetkovi\'c, D., Gutman, I., Hansen, P., (1999) Variable neighborhood search for extremal
graphs. 2. Finding graphs with extremal energy. J. Chem. Inf. Comput. Sci. 39, 984–996.


\bibitem{cons} Constantine, G. (1985) Lower Bounds on the Spectra of
Symmetric Matrices with Nonnegative Entries. Volume 65, 171-178.

\bibitem{Fili} Filipovski, S. (2022). Relations between the energy of graphs and other graph parameters. MATCH Commun. Math. Comput. Chem., 87(3), 661-672.

\bibitem{Gut} Gutman, I. (1978). The Energy of a graph. Berichte der Mathematische Statistischen Sektion im Forschungszentrum Graz. 103, 1-22.

\bibitem{Gut2}  Gutman I., in Hyperenergetic and Hypoenergetic Graphs, ed. by D. Cvetkovic, I. Gutman. Selected Topics on Applications of Graph Spectra (Mathematical Institute, Belgrade, 2011), pp. 113--135.

\bibitem{GFPR} Gutman, I., Firoozabadi, S. Z., de la Pe\~na, J. A., \& Rada, J. (2007). On the energy of regular graphs. MATCH Commun. Math. Comput. Chem, 57(2), 435-442.

\bibitem{GR}Gutman, I., \&  Ramane, H. (2020)  Research on graph energies in 2019, MATCH Commun. Math. Comput. Chem. 84  277-292.

\bibitem{LS} Li, X.,  Shi, Y. (2008). A survey on the Randi\'c index. MATCH Commun. Math. Comput. Chem, 59(1), 127-156.


\bibitem{LSG} Li, X., Shi, Y,. \& Gutman, I., Graph Energy, Springer, New York, 2012.		




\bibitem{Ran} Randi\'{c}, M. (1975). Characterization of molecular branching. Journal of the American Chemical Society, 97(23), 6609-6615.


\bibitem{Wong} Wong D., Wang X., Chu R. Lower bounds of graph energy in terms of matching number Wong D., Wang X., Chu R.
(2018) Linear Algebra and Its Applications, 549 , pp. 276-286.

\bibitem{YLPL} Yan, Z., Liu, C., Pan, Y., \& Li, J. (2021) Energy, Randi\'{c} Index and Maximum Degree of Graphs. MATCH Commun. Math. Comput. Chem,  86 (3). 539-542.

\bibitem{hypo} Gutman, I., \& Radenkovi\'{c}, S. (2007). Hypoenergetic molecular graphs. Indian Journal of Chemistry - Section A Inorganic, Physical, Theoretical and Analytical Chemistry, 46, 1733-1736.



	\end{thebibliography}
\end{document}